\documentclass[10pt]{article}
\usepackage[margin=.72in]{geometry}
\usepackage{amsmath,amssymb,amsthm}
\usepackage{microtype}
\newtheorem{theorem}{Theorem}[section]
\newtheorem{lemma}[theorem]{Lemma}
\newtheorem{corollary}[theorem]{Corollary}
\newcommand{\Pp}{\mathbb P}
\newcommand{\Ee}{\mathbb E}

\renewcommand{\epsilon}{\varepsilon}

\usepackage{graphicx} 
\usepackage[linktocpage=true]{hyperref}
\usepackage{setspace}
\usepackage{tikz}
\usetikzlibrary{decorations.pathreplacing}
\usetikzlibrary{calc}
\usepackage{mathrsfs}
\usepackage{caption,cite}
\usepackage{subcaption}
\usepackage{mathtools,color}
\usepackage{todonotes}
\usepackage{cleveref}

\title{A short proof that $R(3,k)=\Theta(k^2/\log k)$}
\author{ Samuel Harris\thanks{Department of Mathematical and Statistical Sciences, University of Colorado Denver, Denver, USA. E-mail: {\tt Samuel.G2.Harris@ucdenver.edu}}
\and Zion Hefty\thanks{Department of Mathematics, University of Denver, Denver, USA. Email: {\tt Zion.Hefty@du.edu}}
\and  Paul Horn\thanks{Department of Mathematics, University of Denver, Denver, USA and Department of Mathematics and Applied Mathematics, University of Johannesburg, Johannesburg, South Africa. Email: {\tt Paul.Horn@du.edu}.  Research is partially supported by Simons TSM grant 00025233.   }.  
\and Dylan King\thanks{California Institute of Technology, Pasadena, USA. Email: {\tt dking@caltech.edu}} 
\and Florian Pfender\thanks{Department of Mathematical and Statistical Sciences, University of Colorado Denver, Denver, USA. E-mail: {\tt Florian.Pfender@ucdenver.edu}. Research is partially supported by NSF DMS-2152498 and  DMS-2554130.}}
\date{}

\begin{document}
\maketitle
\vspace{-2em}
\begin{center}
    \textit{Dedicated to Micha{\l} Karo\'nski on the occasion of his eightieth birthday}
\end{center}
\begin{abstract}
We give a nibble-free construction proving
$R(3,k)\ge(1/200+o(1))k^2/\log k$.  We also include Shearer's proof bounding the independence number of a triangle-free graph, 
which implies $R(3,k)\le (1+o(1))(k^2/\log k)$.
\end{abstract}

\section{Introduction}

The Ramsey number $R(\ell,k)$ is the least $N$ such that every $N$-vertex graph contains either a clique of size $\ell$ or an independent set of size $k$, and is guaranteed to exist by Ramsey's Theorem~\cite{Ramsey1930}. Recent years have seen a number of breakthrough results on Ramsey numbers; see the wonderful survey by Morris~\cite{morrisICM} presented at the International Congress of Mathematicians 2026. Among off-diagonal Ramsey numbers with $\ell$ constant, the first non-trivial case $R(3,k)$ is particularly well studied. For a detailed history, see Spencer's book chapter~\cite{Spencer2011} and the beautiful introduction of a recent paper by Campos, Jenssen, Michelen and Sahasrabudhe~\cite{campos2025}.

In the early 1980s, Ajtai, Koml\'os and Szemer\'edi \cite{AjtaiKSz80,AjtaiKSz81} proved that 
$ R(3,k)=O\left(k^2/\log k\right)$, 
and shortly afterwards Shearer \cite{Shearer83} improved their lower bound on the independence number of triangle-free graphs, which implies that 
\begin{align*}
    R(3,k)\le (1+o(1))\frac{k^2}{\log k}.
\end{align*}
The most successful method for constructing triangle-free graphs with small independence number, and in turn lower bounds on $R(3,k)$, is to create a triangle-free graph of density $p$ that has pseudo-random properties such that its independence number approximates that of a random graph in $G(n,p)$. The larger one can push this $p$, the smaller the resulting independence number.

One way to do this is to start with a binomial random graph sampled from $G(n,p)$ and then delete an edge from every triangle. For this to be successful $p$ must be chosen small enough so that the deleted edges do not create large independent sets, and thus this method is limited to~$p$ where there are significantly fewer triangles than edges. This approach was pioneered by Erd\H{o}s~\cite{Erdos1961_GraphTheoryProbabilityII}
yielding a bound of $R(3,k)\ge (c+o(1)) k^2/\log^2k$.

In 1995, Kim~\cite{Kim95} matched the order of the upper bound and showed that $ R(3,k)\ge (c+o(1))k^2/\log k$ using the opposite approach. Start with an empty graph, and carefully add random edges without creating triangles (a nibble process) until the graph is both pseudo-random and as dense as possible.

In~\cite{BohmanKeevash21, Fiz20, campos2025}, three groups of authors improved on the constant in the lower bound, culminating with $c=\frac13$. Each of these constructions involved some sort of nibble process with a very technical analysis to control the random process over many steps.

Our approach in~\cite{RamseyR3k25} returns to the idea of deleting edges from triangles. The novel idea is to start with a denser random construction in which we can delete edges more efficiently to destroy all triangles. We use blow-ups of random graphs very similar to the one used by Campos, Jenssen, Michelen, and Sahasrabudhe~\cite{campos2025}  at the start of their nibble.
The random union of two such graphs yields a multigraph where every deleted edge destroys about $\log^2n$ triangles, and thus we can afford to start with many more triangles (and therefore higher density~$p$). Further, the random union along with the random initial input creates enough pseudo-randomness to attain the independence bound of the random graph of the same density. This yields the best currently known lower bound for $R(3,k)$.
\begin{theorem}\label{main1/2}\cite{RamseyR3k25}
    \[
R(3,k)\ge  \left(\frac12+o(1)\right)\frac{k^2}{\log{k}}.
\]
\end{theorem}
This bound is the best one can achieve in any pseudo-random construction, as this parallels a random graph where the density is chosen so that the independence number matches the maximum degree. For this reason, Campos, Jenssen, Michelen, and Sahasrabudhe~\cite{campos2025} have conjectured that $R(3,k)= \left(\frac12+o(1)\right)k^2/\log{k}$.
While the construction in \cite{RamseyR3k25} is much easier to analyze, the paper is still somewhat technical and spans about 20 pages in order to achieve this optimal constant.

In the epilogue to his survey chapter on Ramsey theory \cite{Spencer2011} in Soifer's book \emph{Ramsey Theory} \cite{SoiferRamsey}, Spencer wrote already aware of the results in~\cite{Fiz20, BohmanKeevash21}: 

\vspace{0.2cm}
{\em 
``Is the story of $R(3,k)$ over? I think not. I think there is plenty of room for a consolidation of the results. My dream is a ten-page paper which gives $R(3,k)=\Theta(k^2/\log{k})$."
}
\vspace{0.2cm}

In this note, we realize Spencer's stated dream by further simplifying the proof of Theorem~\ref{main1/2} at the cost of a much smaller constant.

\begin{theorem}\label{thm:lower} %
\[
    R(3,k)\ge  \left(\frac1{200}+o(1)\right)\frac{k^2}{\log{k}}.
\]
\end{theorem}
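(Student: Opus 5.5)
Our plan is to give a simplified version of the construction behind Theorem~\ref{main1/2}: take a random union of two blow-ups of random graphs, and destroy every triangle by deleting edges. Let $n$ be large, and set $m=n/\ell$ with $\ell\approx \log n$ (up to constants), so that there are $m$ ``clusters'' of size $\ell$.

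\textbf{Construction.} We take two independent random graphs $H_1,H_2\sim G(m,q)$ on the cluster set. We also take two independent uniformly random partitions of $[n]$ into $m$ blocks of size $\ell$, identified with the clusters. Let $G_i$ be the blow-up of $H_i$ along the $i$-th partition: we join $u,v$ whenever their blocks are adjacent in $H_i$. Set $G=G_1\cup G_2$. Inside a single $G_i$, a triangle comes from a triangle of $H_i$ and appears $\ell^3$ times. We choose $q$ so that $H_i$ has few triangles, i.e.\ $q\ll m^{-1/2}$, and delete every edge of $G_i$ that lies in a blown-up $H_i$-triangle; we make this cheap by deleting whole triangles of $H_i$ beforehand. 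The remaining triangles of $G$ use edges from both $G_1$ and $G_2$. Because the partitions are independent, two edges of $G_1$ in blocks $A,B$ together with an edge of $G_2$ close a triangle. The key feature is that each $G_1$-pair of blocks $(A,B)$ meets each $G_2$-pair of blocks $(C,D)$ in a structured way. For every ``bad configuration'' (a $G_1$-path $A$--$B$--$C$ of blocks forming a triangle with a $G_2$-edge), we delete all edges of $G_2$ between $A$ and $C$ lying in the few positions that close triangles. Such deletions kill about $\ell^2\approx\log^2 n$ triangles at once. The density is $p\approx 2q$, and we aim for $p=c\sqrt{\log n/n}$.

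\textbf{Counting deletions.} We show that with high probability the deleted edges number at most $\epsilon$ times the edges, even locally. Concretely, every set $S$ of size $k$ loses only a small fraction of its $\approx p k^2/2$ induced edges. The expected number of triangles per edge is about $p^2 n = c^2\log n$, and each deletion covers $\approx\log^2 n$ of them. Per edge, the deleted mass is therefore about $c^2/\log n\to 0$ in aggregate. The local (per set $S$) version needs concentration. We would bound, for fixed $S$, the number of pairs in $S$ whose deletion is forced by a configuration outside $S$. We also bound the number of $G_1$- and $G_2$-edges inside $S$ via Chernoff over the random partitions, conditioning on $H_1,H_2$ being pseudorandom (degrees $\approx qm$, codegrees $O(\log m)$).

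\textbf{Independence number.} We set $k=C\sqrt{n\log n}$, and for each $k$-set $S$ we show $\Pp(S \text{ independent in final graph})\le \exp(-(1+\delta)k\log n)$, which is smaller than $\binom nk^{-1}$. The blow-up structure means $S$ is spread across blocks. Fixing both partitions, $S$ meets each block in some multiset. The number of block pairs of $H_1$ with both endpoints hit by $S$ is at least $\binom{|\text{blocks hit}|}{2}$, and $S$ hits at least $k/\ell$ blocks. A union bound over block-occupancy patterns is cheap compared with $e^{k\log n}$, since there are $\binom{m}{\le k}$ patterns. Edges of $H_1$ between hit blocks are independent with probability $q$, so $\Pp(\text{no }H_1\text{-edge})\le (1-q)^{\Omega(\min(k^2/\ell^2\cdot\ldots))}$. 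Here we exploit that the random partitions make most pairs of $S$ lie in distinct blocks, so $\approx k^2/2$ pairs are $G_1$-candidates and $\approx k^2/2$ are $G_2$-candidates. The main obstacle is to combine this with deletions. We must show that $S$ being independent after deletion requires many $H$-edges on $S$ that were all deleted, and that the deletion events are rare enough, with a sufficiently large slack. That slack is paid for by the constant $1/200$: with $p=c\sqrt{\log n/n}$ and small $c$, we get $n\approx (c^2/C^2)k^2/\log k$ up to constant factors. A crude union bound is then enough.
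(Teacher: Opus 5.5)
Your construction is essentially the paper's: two independent blow-ups of $G(m,q)$ superimposed at random, with the minority-colour edge removed from each bichromatic triangle. However, the proposal stops where the real proof begins. There is a genuine gap. The step you call ``the main obstacle''---bounding $\Pp(S\text{ independent})$ once deletions are accounted for---is left to the hope that deletions are rare enough for a crude union bound, and they are not. Whether a pair of $S$ is deleted is decided by the same graphs $H_1,H_2$ whose edges you want to treat as independent $q$-coins. For adversarial $S$, a constant fraction of the pairs of $S$ are deletable: e.g.\ $S$ may contain several whole monochromatic neighbourhoods $X_x$, each of size $\approx qn$, and every pair inside such an $X_x$ is deletable. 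Your ``deleted mass $\approx c^2/\log n\to 0$'' is only an average. Deleting triangles of $H_i$ ``beforehand'' makes this worse, since it destroys the independence of the edges of $H_i$.

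The missing ideas are the following.
(i) An \emph{ordered} rule for monochromatic triangles: remove the edge opposite the vertex of smallest label. Then whether a red edge $vw$ is deleted depends only on whether $\{v,w\}\subseteq X_x$ for some $x\in V_B$, or for some $x\in V_R$ with $x<\pi_R(v),\pi_R(w)$.
(ii) An exposure order: reveal $\pi_R,\pi_B$ and all of $G_B$, then expose the pairs of $G_R$ in lexicographic order. This way each pair's deletability is known before its coin is flipped.
(iii) A bound on the number of \emph{deletable pairs}, $\sum_x\binom{|X_x\cap I|}{2}\le(\frac1{20}+5\epsilon)K^2$, valid simultaneously for all $K$-sets $I$. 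This is Lemma~\ref{lem:closed}; it needs the large/medium/small split and the choice $K=10pn$.
Sequential conditioning then gives $(1-p)^{\binom r2-(1/20+5\epsilon)K^2}$ with $r=|I_R|$. Your ``Counting deletions'' paragraph bounds the number of deleted \emph{edges} inside $S$, which is not the quantity that is needed.

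Second, you claim the random partitions put most pairs of $S$ in distinct blocks, giving $\approx k^2/2$ candidate pairs for \emph{both} colours. This fails uniformly in $S$. A union of $k/\ell$ whole blocks of the first partition meets only $\binom{k/\ell}{2}$ pairs of $H_1$. More generally, $\Pp(|I_R|\le\gamma K)\approx\exp(-\frac12(1-\gamma)K\log n)$ does not beat $\binom nK=\exp((\frac12+o(1))K\log n)$. Only the independence of the two random maps gives $\max\{|I_R|,|I_B|\}\ge(\frac12-\epsilon)K$ for all $I$ (Lemma~\ref{lem:projection}). The exposure argument is then run in the better colour alone.

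Your parameters are also inconsistent. With $\ell\approx\log n$ and $q\approx\frac c2\sqrt{\log n/n}$ you get $mq^2=c^2/4$, not $q\ll m^{-1/2}$. So a constant fraction of the edges of $H_i$ lie in triangles, and a deleted edge kills about $\ell$ (not $\ell^2$) triangles per common neighbouring block. The paper takes fibres of size $\log^2 n$ so that $mp^2=1/\log n$. Finally, $k=C\sqrt{n\log n}$ gives $n=(1+o(1))k^2/(2C^2\log k)$ whatever $c$ is; in the paper, the constant $\frac1{200}$ reflects the choice $K=10\sqrt{n\log n}$.
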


The construction in this note is almost identical to the one in~\cite{RamseyR3k25}. We merely change a few minor details for better readability of the simplified proof.
Compared to~\cite{RamseyR3k25}, our main simplifications occur in two places. In Section~\ref{edgedeletions}, we consider three instead of four classes of vertices. In Section~\ref{projections}, we focus on only one of the two edge colors when bounding the probability that a set is independent. This last simplification, in particular, avoids many technicalities.

As in all previous results, to establish Theorem~\ref{thm:lower} we prove the following equivalent theorem concerning independence numbers. 

\begin{theorem}\label{thm:alpha}
For every $n$, there exists a triangle-free graph $G$ on $n$ vertices with independence number 
    \[
\alpha(G) \le (10+o(1))\sqrt{n \log n}.
\]
\end{theorem}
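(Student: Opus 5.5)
We will build $G$ along the lines described in the introduction: take a random union of two blow-ups of random graphs, delete edges to kill all triangles, and then bound the independence number by a first-moment argument over candidate independent sets. Concretely, let $m$ be such that $n = m\cdot s$ with blow-up parts of size $s \approx \log n$ (or a suitable power of $\log n$). Take a random graph $H$ on $m$ vertices of density $q$, and blow up each vertex into a part of size $s$. Inside each blown-up edge, do not use the complete bipartite graph. Instead, use a random matching-like or random bipartite graph, so that the result has edge density about $p = c\sqrt{\log n/n}$. Do this twice, with two independent random identifications of the vertex set (random permutations), and color the edges of the two copies red and blue. The union is a multigraph of density $\approx 2p$.

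A triangle of a single color comes from a triangle in $H$. Such triangles are sparse if $q$ is chosen so that $H$ has few triangles through any vertex; we may then simply delete these. The important triangles are mixed, with two edges of one color and one of the other. Each red edge sits inside a red blown-up bipartite block, and blue paths of length two across that block produce about $\log^2 n$ triangles sharing structure. Deleting edges block by block therefore kills many triangles per deleted edge. We will partition vertices into three classes (as promised for Section~\ref{edgedeletions}) and use the classes to decide canonically which edge of each mixed triangle is deleted. For example, we keep red edges between certain classes and blue edges between others, so that every triangle loses an edge. The key claim for this step is that with high probability every vertex loses only a small constant fraction of its red edges. This follows from Chernoff-type concentration of codegrees in the random union.

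Next we bound the independence number. Fix $k = (10+o(1))\sqrt{n\log n}$. As planned for Section~\ref{projections}, we look only at the red edges that survive deletion. For a set $S$ of size $k$, we project $S$ onto the parts of the red blow-up and estimate the probability that no surviving red edge lies inside $S$. We split into cases according to how $S$ distributes over parts.
\begin{itemize}
\item If $S$ is spread over many parts, the edges of $H$ among the occupied parts, together with the random bipartite graphs inside blocks, give $\Pp(S \text{ independent}) \le \exp(-(1-o(1))p k^2/2\cdot c')$. This beats $\binom{n}{k}\le \exp(k\log n)$ once $pk \gtrsim 2\log n/c'$. The constant $10$ absorbs the fraction of edges lost to deletion and the use of only one color.
\item If $S$ is concentrated on few parts, there are far fewer choices of projection. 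We then use independence of $H$ on the parts, namely that $\alpha(H)$ is small, together with a cruder count.
\end{itemize}

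The main obstacle is that deletions depend on the blue graph, so surviving red edges within $S$ are not independent. We plan to handle this by conditioning on the blue graph and on $H$, exposing the red permutation last. We then show that the red edges deleted inside $S$ are determined by a small set of "bad" vertex pairs. Their number is controlled with high probability uniformly over $S$, using codegree bounds. This leaves a product-type bound on the remaining red edges, and a union bound over $S$ then finishes the proof.
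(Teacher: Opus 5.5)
\textbf{There is a genuine gap, and it starts with the construction.} You explicitly replace complete bipartite blocks by random, matching-like bipartite graphs on parts of size about $\log n$. That discards the one feature that makes deletion cheap. In the paper, fibers have size about $\log^2 n$ and blow-ups are complete, so every vertex of a fiber $F(x)$ has the same monochromatic neighborhood $X_x$. A red edge $vw$ is removed only if $\{v,w\}\subseteq X_x$ for some $x\in V_B$ (minority rule) or some $x\in V_R$ with $x<\pi_R(v),\pi_R(w)$ (the ordered rule for monochromatic triangles, which are far from sparse here). There are only $2m=2n/\log^2 n$ such sets, and a given pair lies in one of them with probability at most $2mp^2=2/\log n$. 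In your version the expected codegree of a pair is still $np^2=\Theta(\log n)$. But common neighbors now arrive one at a time from different parts, not in clumps of size $\log^2 n$. So almost every pair has a common blue neighbor, and the minority rule deletes almost every red edge.

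Moreover, the claim you single out as key, that each vertex loses few red edges, is not what a first-moment bound over $\binom nK$ sets needs. You need, simultaneously for all $K$-sets $I$, that the pairs of $I$ which could be deleted number at most $\sum_x\binom{|X_x\cap I|}{2}<(\frac1{20}+5\epsilon)K^2$, which is below $\binom r2\approx K^2/8$. This is Lemma~\ref{lem:closed}. Its main term comes from sets $I$ overlapping large neighborhoods, where deletions concentrate even though each vertex loses few edges. The medium and small ranges need a union bound over pairs $(I,B)$ and McDiarmid's inequality. The three classes $L_I,M_I,S_I$ are exactly this bookkeeping device, not a deletion rule.

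\textbf{The probabilistic step also needs repair.} Conditioning on $H$ and exposing the red permutation last does not make deletions predictable, since the red monochromatic deletions are created by that same permutation. The paper instead reveals $\pi_R,\pi_B$, then all of $G_B$, then the pairs of $G_R$ in lexicographic order. By the ordered rule, whether a red pair in $I$ is blocked is decided before its coin is flipped, which gives the bound $(1-p)^{\binom r2-(\frac1{20}+5\epsilon)K^2}$.

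You also cannot always work in red. With complete blow-ups, take the union of about $K/\log^2 n$ red fibers over an independent set of $G_R$ (whose independence number is about $\sqrt{n\log n}$). This $K$-set contains no red edge at all. So neither ``$\alpha(H)$ is small'' nor a count of projections rescues your concentrated case. The missing ingredient is Lemma~\ref{lem:projection}. Because $\pi_R$ and $\pi_B$ are independent, $\Pp(|I_R|\le\gamma K,\ |I_B|\le\gamma K)\le\exp(-(1-\gamma+o(1))K\log n)$. Hence with high probability every $K$-set has a projection of size at least $(\frac12-\epsilon)K$ in some color, and one runs the one-color exposure argument in that color. With $pK=10\log n$, the exponent $10(\frac18-\frac1{20})=\frac34>\frac12$ then beats $\binom nK$.
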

With a bit of care, our proof here shows $\alpha(G) \le (8+o(1))\sqrt{n \log n}$, a minor improvement we omit for the sake of exposition.
In order to contain a full proof of $R(3,k)=\Theta(k^2/\log k)$, we include Shearer's short proof of the upper bound in the final section.

\section{The random construction of $G$}

Let us now formally define our construction of the desired $n$-vertex graph $G$.  For the sake of readability, we omit floors and ceilings whenever they are inconsequential.

Set
\[
 m=\frac{n}{\log^2 n},\qquad p=\sqrt{\frac{\log n}{n}}, \qquad K = 10pn=10\sqrt{n \log{n}}.
\]
The construction of $G$ 
then consists of the following steps:
\begin{enumerate} 
\item Sample two independent copies of the random graph $G(m,p)$, $G_{R}$ and $G_{B}$, on the vertex set $V_R = V_B=[m]$. 

\item For every vertex $v\in V(G)$, sample independently uniformly at random two values $\pi_R(v),\pi_B(v)\in [m]$.

\item Define a two-colored multigraph $G_1$ on the vertex set $V(G_1)=V(G)$ as follows. If $\pi_R(v)\pi_R(w)\in E(G_R)$, then $vw$ is a red edge in $G_1$. Similarly, if $\pi_B(v)\pi_B(w)\in E(G_B)$, then $vw$ is a blue edge in $G_1$. 

\item Next, define a triangle-free multigraph $G_2\subseteq G_1$.
From every monochromatic red triangle $uvw\subseteq G_1$ with $\pi_R(u)<\pi_R(v)<\pi_R(w)$, remove the red edge $vw$. From every monochromatic blue triangle $uvw\subseteq G_1$ with $\pi_B(u)<\pi_B(v)<\pi_B(w)$, remove the blue edge $vw$.

Finally, 
remove the edge of the minority color in every non-monochromatic triangle in $G_1$. 
If a triangle $uvw$ contains double edges, then we consider the resulting triangles separately. 

\item The graph $G$ is then the simple graph underlying $G_2$. 
\end{enumerate} 

For $x\in V_R$ and $y\in V_B$, let $F(x)=\pi_R^{-1}(x)$ and $F(y)=\pi_B^{-1}(y)$ denote the fibers in $V(G)$.
Observe that the monochromatic subgraphs of $G_1$ are blow-ups of $G_R$ and $G_B$, where each vertex $x$ is blown up to an independent set of size $|F(x)|$, and the vertices of the two blow-up graphs are identified at random.

\section{Edge deletions}\label{edgedeletions}

In this section, we prove two lemmas that allow us to control the number of deleted edges in $K$-sets in $G$.

If an edge $vw$ in a triangle $uvw\subset G_1$ is deleted, then $uv$ and $uw$ have the same color. For this reason, we want to study the monochromatic neighborhoods of vertices $u$. Observe that if $x\in V_R$ and $u\in F(x)$, then all vertices in $F(x)$ have the same red neighborhood $X_x:=\bigcup_{xz\in E(G_R)} F(z)$. Analogously, define $X_y=\bigcup_{yz\in E(G_B)} F(z)$ for $y\in V_B$.

Chernoff (see for instance~\cite{FriezeKarBook}, Chapter 21) and union bounds show the following event. In all that follows, let $0<\epsilon\le \frac1{1000}$ be a universal constant. 

\begin{lemma}\label{lem:fiber_and_degree}
With probability $1-o(1)$, the following hold for all distinct $x,y \in V_R \cup V_B$.
\begin{align}
 |F(x)|&\le (1+\epsilon)\log^2 n,\label{eq:fiber}\\
 |X_x|&\le(1+\epsilon)pn,\label{eq:degree}\\
 |X_x\cap X_y|&\le \log^3n. 
 \label{eq:codegree}
\end{align}
\end{lemma}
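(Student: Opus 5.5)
We will prove the three bounds in order, each by a Chernoff bound for one fixed choice of $x$ (or pair $x,y$) followed by a union bound over the at most $(2m)^2$ choices. Each failure probability will be $\exp(-\Omega(\log^2 n))$ or smaller, hence $o(m^{-2})$. The relevant randomness is the maps $\pi_R,\pi_B$, which are independent of $G_R,G_B$. We condition on the graphs $G_R,G_B$ when useful.

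\textbf{Fiber sizes \eqref{eq:fiber}.} For fixed $x\in V_R$, $|F(x)|\sim \mathrm{Bin}(n,1/m)$, with mean $n/m=\log^2 n$. Chernoff gives
\[
\Pp\bigl(|F(x)|>(1+\epsilon)\log^2 n\bigr)\le \exp(-\epsilon^2\log^2 n/3)=n^{-\Omega(\log n)},
\]
which survives the union bound over $2m\le 2n$ vertices. The same holds for $y\in V_B$.

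\textbf{Degrees \eqref{eq:degree}.} First, for each $x\in V_R$ the degree $d_R(x)\sim\mathrm{Bin}(m-1,p)$ has mean about $pm=\sqrt{n\log n}/\log^2 n\to\infty$ polynomially. Chernoff therefore gives $d_R(x)\le(1+\epsilon/3)pm$ for all $x$ with failure probability $\exp(-\Omega(pm))$, which is tiny. Now condition on $G_R$. Then $|X_x|=\sum_{z\in N(x)}|F(z)|$ counts vertices $v$ with $\pi_R(v)\in N_{G_R}(x)$, so $|X_x|\sim\mathrm{Bin}(n,d_R(x)/m)$, with mean at most $(1+\epsilon/3)pn$. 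Another Chernoff bound gives $|X_x|\le(1+\epsilon)pn$ with failure probability $\exp(-\Omega(pn))$. This is more direct than multiplying by the fiber bound \eqref{eq:fiber}, which would lose a factor. The same argument applies in blue.

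\textbf{Codegrees \eqref{eq:codegree}.} This is the case needing care, and we split it into two.

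\emph{Same color, say $x,y\in V_R$.} Here $X_x\cap X_y=\bigcup_{z\in N(x)\cap N(y)}F(z)$. The common neighbourhood satisfies $|N(x)\cap N(y)|\sim\mathrm{Bin}(m-2,p^2)$, with mean $mp^2=1/\log n$. Hence $\Pp(|N(x)\cap N(y)|\ge t)\le (emp^2/t)^t$. Taking $t=\log n/(2\log\log n)$, this is at most $(\log n)^{-t}=n^{-1/2-o(1)}$, which is too weak for a union bound over $m^2$ pairs. So we take $t$ a large constant $C$ instead, which gives failure probability $\le m^{-C}\log^{-C}n$ and survives the union over $m^2$ pairs once $C\ge 3$. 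Combined with \eqref{eq:fiber}, this yields $|X_x\cap X_y|\le C(1+\epsilon)\log^2 n\le\log^3 n$.

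\emph{Mixed colors, $x\in V_R$, $y\in V_B$.} Condition on $G_R$, $G_B$ and $\pi_R$, and restrict to the event that \eqref{eq:degree} holds. Then $X_x$ is a fixed set of size at most $(1+\epsilon)pn$. Each $v\in X_x$ lies in $X_y$ independently, with probability $d_B(y)/m\le(1+\epsilon)p$, because $\pi_B$ is independent of everything else. So $|X_x\cap X_y|$ is stochastically dominated by $\mathrm{Bin}\bigl((1+\epsilon)pn,(1+\epsilon)p\bigr)$, whose mean is at most $(1+\epsilon)^2p^2n=O(\log n)$. Chernoff in the form $\Pp(X\ge t)\le 2^{-t}$ for $t\ge 6\Ee X$ gives
\[
\Pp\bigl(|X_x\cap X_y|>\log^3 n\bigr)\le 2^{-\log^3 n}.
\]
This easily beats the $m^2$ pairs.

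The main obstacle is the same-color codegree. There the natural threshold for the common neighbourhood size is not concentrated around its mean of $1/\log n$, so one has to use the crude tail bound with a constant threshold and then absorb the fiber sizes. The slack between $\log^2 n$ and $\log^3 n$ is what makes this work.
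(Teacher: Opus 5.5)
Your fiber, degree, and mixed-colour codegree arguments are fine and essentially match the paper's. The gap is in the same-colour codegree case. For constant $C$ you claim $\Pp\bigl(|N(x)\cap N(y)|\ge C\bigr)\le m^{-C}\log^{-C}n$. But $m^{-C}\log^{-C}n=p^{2C}$ is only the probability that a \emph{fixed} $C$-set lies in the common neighbourhood; you dropped the factor $\binom{m}{C}\approx m^C/C!$ for choosing that set. The correct bound is $\binom{m}{C}p^{2C}\le (mp^2)^C/C!=1/(C!\log^C n)$, and this is also the true order of the probability. So the expected number of same-colour pairs with codegree at least $C$ is of order $m^2/\log^C n\to\infty$, and the union bound fails badly. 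In fact the maximum codegree of $G_R$ is typically of order $\log n/\log\log n$, which is why your attempt at $t=\log n/(2\log\log n)$ also failed, and no constant bound can hold. Decreasing the threshold was the wrong direction: it only makes the tail heavier.

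The fix is to \emph{increase} the threshold, spending the slack between $\log^2 n$ and $\log^3 n$ that you correctly identified. The fiber bound permits codegrees up to nearly $\log n$, comfortably above $\log n/\log\log n$. The paper takes $t=\frac12\log n$, for which $(emp^2/t)^t=(2e/\log^2 n)^{\frac12\log n}=n^{-(1-o(1))\log\log n}$, easily beating the $m^2$ pairs. Then $|X_x\cap X_y|\le\frac12\log n\cdot(1+\epsilon)\log^2 n<\log^3 n$ by the fiber bound. The paper instead applies Chernoff directly to $|X_x\cap X_y|\sim\operatorname{Bin}\bigl(n,|N_{G_R}(x)\cap N_{G_R}(y)|/m\bigr)$, whose mean is below $\frac12\log^3 n$; either route finishes. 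With this change your proof goes through.
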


\begin{proof}[Proof of Lemma~\ref{lem:fiber_and_degree}]
Chernoff and union bounds on the random graph $G_R$ give that with probability $1-o(1)$, for all distinct $x,y\in V_R$, $|N_{G_R}(x)|<(1+\epsilon^2)pm$ and $|N_{G_R}(x)\cap N_{G_R}(y)|<0.5\log n$, and analogous bounds for $G_B$. Now fix $G_R$ and $G_B$ and assume that the previous bounds hold.

For $x\in V_R$, the fiber size $|F(x)|$ is $\operatorname{Bin}(n,1/m)$, and the quantity $|X_x|$ is $\operatorname{Bin}(n,|N_{G_R}(x)|/m)$. If $y\in V_R$, then $|X_x\cap X_y|$ is $\operatorname{Bin}(n,|N_{G_R}(x)\cap N_{G_R}(y)|/m)$. If $y\in V_B$, then $|X_x\cap X_y|$ is $\operatorname{Bin}(n,|N_{G_R}(x)|| N_{G_B}(y)|/m^2)$. The symmetric statements are true for $x\in V_B$. The Lemma now follows from Chernoff and union bounds over all choices of $x$ and $y$.
\end{proof}

For $I \subseteq V(G)$ with $|I| = K$, let $I_R=\cup_{v \in I}\pi_R(v)$ and $I_B=\cup_{v \in I}\pi_B(v)$ denote the projection of $I$ onto $G_R$ and $G_B$ respectively. 
Set $t_1=n^{1/4+\epsilon}$ and $t_2=n^{2\epsilon}$, and partition the vertices of $V_R \cup V_B$ by the size of $X_x\cap I$:
\[
 L_I=\{x:|X_x\cap I|>t_1\},\quad
 M_I=\{x:t_2<|X_x\cap I|\le t_1\},\quad
 S_I=\{x:|X_x\cap I|\le t_2\}.
\]
The next lemma addresses the total contribution of pairs in the $X_x\cap I$. 
\begin{lemma}\label{lem:closed}
With probability $1-o(1)$, simultaneously for all $I \subseteq V(G)$ with $|I| = K$,
\begin{align}\label{eq:closed}
 \sum_{x\in V_R\cup V_B} {|X_x\cap I|\choose 2}<\left(\frac{1}{20}+5\epsilon\right)K^2.
\end{align}
\end{lemma}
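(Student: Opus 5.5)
The plan is to split the sum in \eqref{eq:closed} according to the partition $V_R\cup V_B=L_I\cup M_I\cup S_I$. I expect the large classes $L_I$ to produce the main term $K^2/20$ deterministically, using only the properties of Lemma~\ref{lem:fiber_and_degree}. The classes $M_I$ and $S_I$ should then contribute at most $O(\epsilon)K^2$, via a probabilistic estimate strong enough to survive a union bound over all $\binom{n}{K}=\exp\bigl((\tfrac12+o(1))K\log n\bigr)$ sets $I$. Throughout I condition on the outcome of Lemma~\ref{lem:fiber_and_degree}, and for the probabilistic part also on $G_R,G_B$ satisfying the degree and codegree bounds from its proof. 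The only remaining randomness is then in the maps $\pi_R,\pi_B$.

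\textbf{The class $L_I$.} Let $k=|L_I|$. By \eqref{eq:codegree} the sets $X_x\cap I$, $x\in L_I$, pairwise intersect in at most $\log^3 n$ vertices. Suppose $k\ge k_0:=3K/t_1$, and apply Bonferroni's inequality to any $k_0$ of these sets. This gives
\[
K\ge k_0t_1-\binom{k_0}{2}\log^3n>3K-O\bigl(K^2n^{-1/2-2\epsilon}\log^3 n\bigr),
\]
which is impossible. Hence $k<3K/t_1=O(n^{1/4-\epsilon}\sqrt{\log n})$. The same inequality then yields
\[
\sum_{x\in L_I}|X_x\cap I|\le K+\binom{k}{2}\log^3 n=(1+o(1))K,
\]
since $k^2\log^3 n=O(n^{1/2-2\epsilon}\log^4 n)=o(K)$. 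Each summand is at most $|X_x|\le(1+\epsilon)pn=(1+\epsilon)K/10$ by \eqref{eq:degree}. Using $\binom{a}{2}\le a\cdot\max/2$, we obtain
\[
\sum_{x\in L_I}\binom{|X_x\cap I|}{2}\le \tfrac12\cdot\tfrac{(1+\epsilon)K}{10}\cdot(1+o(1))K\le\bigl(\tfrac1{20}+\epsilon\bigr)K^2.
\]

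\textbf{The class $M_I$.} Here $\sum_{M_I}\binom{|X_x\cap I|}{2}\le t_1\sum_{M_I}|X_x\cap I|$, so it suffices to show that w.h.p.\ for all $I$ we have $\sum_{x\in M_I}|X_x\cap I|\le 2\epsilon K^2/t_1$. This quantity is $\gg K$, and that is what makes a union bound possible. For a fixed color, $u\in X_x$ means $\pi(u)\in N(x)$, an event of probability at most $(1+\epsilon^2)p$. A bad configuration is witnessed by a collection of $x$'s, each with a distinguished set of more than $t_2$ vertices of $I$ projecting into $N(x)$, with total size $s\ge 2\epsilon K^2/t_1$. By the codegree bound, these incidences can be chosen so that each vertex $u\in I$ is used only boundedly often, and so that the events are determined by the independent values $\pi(u)$. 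The probability of such a witness is then roughly $p^{\Omega(s)}$. The number of witnesses is at most $m^{s/t_2}\binom{K}{\cdot}$. Since $s\log(1/p)\gg K\log n$, the bound beats $\binom nK$.

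\textbf{The class $S_I$: the main obstacle.} This is the step I expect to be hardest. The crude bound $t_2\sum_x|X_x\cap I|$ is too weak by a factor of about $n^{2\epsilon}$. Instead I will rewrite the sum as a count over pairs $\{u,v\}\subseteq I$: each pair is weighted by the number of $x\in S_I$ with $u,v\in X_x$, that is, by the red or blue codegree of $\pi(u),\pi(v)$. Its expectation for fixed $I$ is about $K^2mp^2/2=K^2/(2\log n)=o(K^2)$. To get a tail bound of order $\exp(-K\log n)$, I will reveal $\pi_R(u),\pi_B(u)$ for $u\in I$ one vertex at a time. The increment at step $u$ is at most the number of $x\in N(\pi(u))$ already carrying earlier vertices, times $t_2$. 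This increment is bounded by $t_2(1+\epsilon)pm$, and its conditional mean is only about $t_2|\{x:X_x\cap I_{<u}\ne\emptyset\}|\cdot p$. A Freedman/Bernstein-type martingale inequality, possibly combined with a further dyadic split on $|X_x\cap I|$ inside $S_I$, should give deviation $\epsilon K^2$ with probability $\exp(-\omega(K\log n))$. This last concentration step is where I expect the real work to lie. The truncation at $t_2=n^{2\epsilon}$ is exactly what keeps the increments small enough for it to go through.
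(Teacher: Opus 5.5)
Your $L_I$ step is the paper's argument (with $3K/t_1$ in place of $2K/t_1$) and is fine. The gap is in $M_I$ and $S_I$. There you freeze $G_R,G_B$, assuming only the degree and codegree bounds, and use only the randomness of $\pi_R,\pi_B$. This cannot work. The union bound must cover the sets $I=\bigcup_{y\in T}F(y)$ (trimmed to size $K$) for every $T\subseteq V_R$ with $|T|\approx K/\log^2 n$. Such sets exist whatever $\pi$ is, so for them the required bounds are statements about $G_R$ alone. Equivalently, for a fixed $I$, collapsing onto so few red points has probability $\exp(-(\frac12+o(1))K\log n)$, which cannot beat $\binom nK$; this is why Lemma~\ref{lem:projection} needs both colours. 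Degree and codegree do not give these bounds. For example, let the sets $N(x)\cap T$ form a near-complete partial Steiner system on $T$ with blocks of size $12$. This respects both bounds, yet almost every pair of $I$ in distinct fibres lies in some $X_x$ with $x\in S_I$. So the non-fibre part of the $S_I$-sum is about $K^2/2$. The saving factor $mp^2=1/\log n$ comes only from the independence of the edges.

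Your individual steps fail accordingly. In $M_I$, $s\ge 2\epsilon K^2/t_1$ is polynomially larger than $K$, so vertices of $I$ cannot be used only boundedly often. Any event determined by the $2K$ values $\pi_R(u),\pi_B(u)$, $u\in I$, has probability $0$ or at least $m^{-2K}$. So a per-witness bound $p^{\Omega(s)}$ is impossible, and summing over $m^{s/t_2}\ge\exp(\Omega(Kn^{1/4-3\epsilon}\log^{3/2}n))$ witnesses is hopeless. In $S_I$, Freedman with increments up to $t_2pm$ gives an exponent of only about $\epsilon K^2/(t_2pm)=10\epsilon n^{-2\epsilon}K\log^2 n=o(K\log n)$.

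The paper does the opposite: it fixes $\pi$ and uses the independent edges. For $M_I$ it takes a minimal $B\subseteq M_I$ with $\sum_{x\in B}|X_x\cap I|\ge Kn^{1/4-\epsilon}$, so $|B|<2Kn^{1/4-3\epsilon}$. Then $G_R$ and $G_B$ together need at least $q\approx Kn^{1/4-\epsilon}/(2\log^2 n)$ edges between $B_R$ and $I_R$, resp.\ $B_B$ and $I_B$. This count is dominated by $\operatorname{Bin}(2K|B|,p)$ with mean $o(q)$. That gives $\exp(-n^{3/4-2\epsilon})$ against $\exp(\frac12 n^{3/4-2\epsilon})$ pairs $(I,B)$. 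For $S_I$, the edge sets from distinct $x\notin I_R\cup I_B$ into $I_R$ (resp.\ $I_B$) are disjoint. So $Z_I^*$ is a function of $2m\gg K$ independent coordinates, each of influence at most $\binom{t_2}{2}$, and McDiarmid gives $\exp(-\Omega(n^{1-8\epsilon}\log^3 n))$.

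You must also split off pairs inside a common fibre before concentrating, as the paper does. The sum counts a pair inside $F(y)$ once for every neighbour of $y$. Take $I$ to be the union of the red fibres over a generic set of about $K/\log^2 n$ points. Then these pairs alone put about $|I_R|\binom{\log^2 n}{2}pm\approx Kpn/2=K^2/20$ into the $S_I$-sum, so no tail bound $\exp(-\omega(K\log n))$ for the full $S_I$-sum can hold. Taking $I\approx X_{x_1}\cup\dots\cup X_{x_{10}}$ adds another $\approx K^2/20$ from $L_I$, so the displayed sum itself can reach about $K^2/10$. What the paper's proof actually controls, and all that Lemma~\ref{lem:exposure} needs, is the number of distinct pairs of $I$ lying in some $X_x$, with the fewer than $\epsilon K^2$ fibre pairs counted once. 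That is the quantity to aim for.
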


\begin{proof}
We start by bounding pairs inside fibers $F(x)$. For each $x\in I_R$, there are at most $|F(x)|\le (1+\epsilon)\log^2n$ vertices in $I\cap F(x)$ by~\eqref{eq:fiber}. Thus, the number of such pairs is maximized at less than $\frac12(1+\epsilon)K\log^2n$ by setting $|I_R|=K/((1+\epsilon)\log^2n)$. The same bound holds for $x\in I_B$ for a total of less than $(1+\epsilon)K\log^2n<\epsilon K^2$ pairs inside fibers.

For all remaining pairs, we treat the three classes separately, and always assume that $n$ is large enough for the stated inequalities to hold.  First, $|L_I|<2K/t_1$.  Otherwise inclusion-exclusion and \eqref{eq:codegree}, applied to $2K/t_1$ members of $L_I$, give a union of size greater than $K$, since $K\log^3n/t_1^2=o(1)$.  Another inclusion-exclusion estimate, applied to the entire family $L_I$, gives 
\[ 
\sum_{x\in L_I}|X_x\cap I| \le K+\binom{|L_I|}{2}\log^3 n < K+\frac{2K^2}{t_1^2} \log^3 n<(1+\epsilon) K. 
\]

Hence, using \eqref{eq:degree} and $K=10pn$,
\[
 \sum_{x\in L_I}\binom{|X_x \cap I|}{2}
 <\frac{(1+\epsilon)pn}{2}\sum_{x\in L_I}|X_x \cap I|
 <\frac{(1+\epsilon)^2}{20}K^2 < \left(\frac{1}{20}+\epsilon\right)K^2.
\]

For the vertices in $M_I$, we show that, with high probability, every $K$-set
$I\subseteq V(G)$ satisfies

\begin{align}\label{eq:medium-sum}
 \sum_{x\in M_I}|X_x \cap I|\le Kn^{1/4-\epsilon}.
\end{align}
Indeed, 
this gives
\[
 \sum_{x\in M_I}\binom{|X_x\cap I|}2
 \le \frac{t_2}{2}\sum_{x\in M_I}|X_x \cap I|
 \le \frac{K n^{1/2}}2
 <\epsilon K^2.
\]
If \eqref{eq:medium-sum} fails, let $B \subseteq M_I$ be minimal so that $Y = \sum_{x \in B} |X_x\cap I| \ge Kn^{1/4 - \varepsilon}$, and in particular $Y\le Kn^{1/4 - \varepsilon}+t_1$.  Note that $|B| \leq n^{-2\varepsilon}Y < 2Kn^{1/4 - 3\varepsilon}.$  

Write $B_R=B\cap V_R$ and $B_B=B\cap V_B$.
On \eqref{eq:fiber}, every edge
between $B_R$ and $I_R$ in $G_R$, or between $B_B$ and $I_B$
in $G_B$, accounts for at most $2(1+\epsilon)\log^2 n$ incidences counted by $Y$.
Consequently, the graphs $G_R$ and $G_B$ contain together at least
\[
 q:=\frac{Y}{2(1+\epsilon)\log^2 n}
  >\frac{K n^{1/4-\varepsilon}}{2(1+\epsilon)\log^2 n}
\]
edges between $B_R$ and $I_R$, and $B_B$ and $I_B$, respectively.

For fixed $I$ and fixed $B$, the number $Z$ of these 
edges is stochastically dominated by
$ \operatorname{Bin}\!\left(2K|B|,p\right)$ with mean $
 2K|B|p\le 2K^2n^{1/4-3\varepsilon}p=o(q)$.
Thus, for sufficiently large $n$, a Chernoff bound gives that
$
 \Pp(Z\ge q)
 \le \exp\!\left(-n^{3/4-2\varepsilon}\right).
$

There are at most 
\[
 \binom nK
 \sum_{j\le 2K n^{1/4-3\varepsilon}}\binom{2m}{j} \leq \left(\frac{en}{K}\right)^K \left(2Kn^{1/4-3\epsilon}\right) \left( \frac{em}{Kn^{1/4-3\epsilon}} \right)^{2Kn^{1/4-3\epsilon}}
 \leq \exp\left(\frac12 n^{3/4-2\epsilon}\right)
\]
choices for $(I,B)$.  A union bound therefore shows that, with
probability $1-o(1)$, no such pair $(I,B)$ exists.

Finally consider vertices in $S_I$.  Vertices  $x\in S_I\cap (I_R\cup I_B)$ contribute at most
$2K\binom{t_2}{2}<\epsilon K^2$ to the sum in~\eqref{eq:closed}.  Let 
\[
Z_I=\sum_{x\in S_I\setminus(I_R\cup I_B)}{|X_x\cap I|\choose 2}
\]
count the remaining pairs, and let $Z_I^*\le Z_I$ only count pairs which are not inside some fiber (we have accounted for such pairs at the start of the proof).  
Every pair of vertices in $G_1$ that is not in some fiber $F(x)$ has probability at most 
$2mp^2=2/\log n$ to end up in some $X_x$ when $G_R$ and $G_B$ are exposed, so $\Ee[ Z_I^*]\le K^2/\log n$. 
 Reveal the edges in $G_R$ to expose $X_x\cap I$ for all $x\in V_R\setminus I_R$ step by step in order of $x$, followed by the edges in $G_B$ for $x\in V_B\setminus I_B$.  Changing one step and thus one $X_x\cap I$ changes $Z_I^*$ by at most $\binom{t_2}{2}$.  McDiarmid's inequality (see~\cite{FriezeKarBook}, Chapter 21) gives
\[
 \Pp\left(Z_I^*>\epsilon K^2\right)\le \Pp\left(Z_I^*>K^2/\sqrt{\log n}\right)
 \le\exp\bigl(-10^4n^{1-8\epsilon}\log^3n\bigr)
 =o\left(\binom nK^{-1}\right).
\]
A union bound over all $I$ shows that with probability $1-o(1)$, we have $Z_I^*<\epsilon K^2$ for all $I$. Adding all bounds for the various pairs finishes the proof of the lemma.
\end{proof}

\section{Size of projections and ordered exposure}\label{projections}

We now turn to the crucial part of the proof of Theorem~\ref{thm:alpha}, showing that vertex sets of size $K$ are not independent.  
We first show, with high probability, a lower bound on the size of the maximum of $|I_R|$ and $|I_B|$. Here and later, we use that $\binom nK=\exp\left(\left(\frac12+o(1)\right)K\log n\right)$.
\begin{lemma}\label{lem:projection}
With probability $1-o(1)$, 
$\max\{|I_R|,|I_B|\}\ge(\frac12-\epsilon)K$ for all $I \subseteq V(G)$ with $|I| = K$.
\end{lemma}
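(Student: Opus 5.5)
The plan is a first-moment union bound over all $K$-sets $I$, using only the randomness of the maps $\pi_R,\pi_B$. The graphs $G_R$ and $G_B$ play no role here. Fix $I\subseteq V(G)$ with $|I|=K$ and set $a=\frac12-\epsilon$. Since $\pi_R$ and $\pi_B$ are independent and each assigns every vertex a uniform value in $[m]$, the events $\{|I_R|< aK\}$ and $\{|I_B|< aK\}$ are independent and have the same probability. Hence
\[
\Pp\bigl(\max\{|I_R|,|I_B|\}<aK\bigr)=\Pp\bigl(|I_R|<aK\bigr)^2 .
\]

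To bound $\Pp(|I_R|<aK)$, note that if the projection has fewer than $aK$ points, then all of $\pi_R(I)$ lies in some set $T\subseteq[m]$ of size $aK$. A union bound over such $T$ gives
\[
\Pp\bigl(|I_R|<aK\bigr)\le \binom{m}{aK}\Bigl(\frac{aK}{m}\Bigr)^{K}\le \Bigl(\frac{em}{aK}\Bigr)^{aK}\Bigl(\frac{aK}{m}\Bigr)^{K}=e^{aK}\Bigl(\frac{aK}{m}\Bigr)^{(1-a)K}.
\]
With $m=n/\log^2 n$ and $K=10\sqrt{n\log n}$ we have $aK/m=n^{-1/2+o(1)}$. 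So this probability is at most $\exp\bigl(-(1-a)(\frac12-o(1))K\log n\bigr)$, and squaring gives $\exp\bigl(-(1-a)(1-o(1))K\log n\bigr)=\exp\bigl(-(\frac12+\epsilon-o(1))K\log n\bigr)$.

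Finally we take a union bound over the $\binom nK=\exp\bigl((\frac12+o(1))K\log n\bigr)$ choices of $I$. The failure probability is at most $\exp\bigl(-(\epsilon-o(1))K\log n\bigr)=o(1)$, which proves the lemma.

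The only delicate point is the exponent bookkeeping. A single projection contributes only about $\frac14 K\log n$ to the exponent, which is too weak to beat $\binom nK$ on its own. It is the independence of the two projections that doubles this to $(\frac12+\epsilon)K\log n$, and this is just enough because $1-a>\frac12$. One must also check that the $e^{aK}$ factor and the $\log^2 n$ and constant factors inside $aK/m$ only change the exponent by $O(K\log\log n)=o(K\log n)$; they do.
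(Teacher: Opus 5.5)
Your proposal is correct and follows the paper's proof essentially step for step. You bound $\Pp(|I_R|<aK)$ by $\binom{m}{aK}(aK/m)^K=\exp\bigl(-\tfrac12(1-a+o(1))K\log n\bigr)$, square it using the independence of $\pi_R$ and $\pi_B$, and beat $\binom nK=\exp\bigl((\tfrac12+o(1))K\log n\bigr)$ with the resulting exponent $(\tfrac12+\epsilon)K\log n$; your strict event $|I_R|<aK$ versus the paper's $|I_R|\le\gamma K$ is an immaterial difference.
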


\begin{proof}
For fixed $I$ and $0<\gamma<1$,
\[
 \Pp(|I_R|\le\gamma K)
 \le\binom m{\gamma K}\left(\frac{\gamma K}{m}\right)^K
 =\exp\left(-(1-\gamma+o(1))K\log(m/K)\right)
 =\exp\left(-\tfrac12(1-\gamma+o(1))K\log n\right).
\]
The random variables $|I_R|$ and $|I_B|$ are independent.
Taking the union bound over all $K$-sets $I$ and setting $\gamma=\frac12-\epsilon$, we then have
\begin{align*}
    \Pp\left(\exists I:(|I_R|\le (\tfrac12-\epsilon) K)\land (|I_B|
    \le(\tfrac12-\epsilon) K)\right)
 \le
 {n\choose K}\exp\left(-(\tfrac12+\epsilon)K\log n\right)
 =\exp\left((-\epsilon+o(1))K\log n\right).
\end{align*}
\end{proof}
For a fixed $I$, define the event $\pi^I_{r,b}:=(|I_R|=r)\land (|I_B|=b)$.
\begin{lemma}\label{lem:exposure}
Fix $I$ and let $r,b\in [1,K]$ with 
$\max\{r,b\}\ge(\frac12-\epsilon)K$.  On the event \eqref{eq:closed} and $n$ large enough,
\[
 \Pp(I\text{ is independent in }G) \mid \pi^I_{r,b})
 \le\exp\left(-p\left(\frac{\left(\frac12-\epsilon\right)^2}{2}-\frac{1}{20}-6\epsilon\right)K^2\right).
\]
\end{lemma}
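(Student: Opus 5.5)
Without loss of generality $r=|I_R|\ge(\frac12-\epsilon)K$, since the roles of red and blue are symmetric. As the paper announces, we look only at the red color. Condition on $\pi_R,\pi_B$ (consistent with $\pi^I_{r,b}$) and on all of $G_B$. By independence of $G_R$ from these, $G_R$ is still distributed as $G(m,p)$. In $G_R[I_R]$ there are $\binom r2$ potential pairs $xz$ with $x\ne z\in I_R$. If such a pair is an edge of $G_R$, then every pair $v\in F(x)\cap I$, $w\in F(z)\cap I$ is a red edge of $G_1$. So if $I$ is independent in $G$, every red edge of $G_R[I_R]$ must have been deleted in passing from $G_1$ to $G_2$. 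Fix one representative $v_x\in F(x)\cap I$ for each $x\in I_R$. Then for each edge $xz$ of $G_R[I_R]$, the red edge $v_xv_z$ lies in some triangle $uv_xv_z$ of $G_1$ from which it was removed. In that triangle, $uv_x$ and $uv_z$ have the same color (red, or blue in the minority case). So $v_x,v_z\in X_y$ for some $y\in V_R\cup V_B$ with $u\in F(y)$.

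The plan is therefore to expose $G_R$ pair by pair over pairs inside $I_R$, in a fixed order, and at each step ask whether the pair $xz$ is an edge. The trouble is that whether $v_x,v_z$ lie in a common $X_y$ depends on $G_R$ itself. I would handle this with an ordered exposure that only counts a pair as ``covered'' if it is covered by a structure already revealed or revealable independently. Concretely, for red $y$ the set $X_y$ depends only on the edges of $G_R$ at $y$. A pair $\{x,z\}\subseteq I_R$ with both $v_x,v_z\in X_y$ requires the edges $yx,yz$. These are edges between distinct pairs, distinct from $xz$ itself unless $y\in\{x,z\}$. But $y\in\{x,z\}$ is impossible for a red triangle, since $F(x)$ is independent in red.

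So the event ``$I$ independent'' implies that the edge set of $G_R[I_R]$ is contained in the set $\mathcal C$ of pairs $\{x,z\}$ for which $v_x,v_z\in X_y$ for some $y$. We bound
\[
|\mathcal C|\le \sum_y \binom{|X_y\cap I|}{2}<\left(\tfrac1{20}+5\epsilon\right)K^2
\]
by \eqref{eq:closed}. This bound is uniform, but $\mathcal C$ is random and correlated with $G_R[I_R]$.

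The cleanest route is the following. Red $X_y$ for $y\notin I_R$ are determined by edges between $I_R$ and $V_R\setminus I_R$, independent of $G_R[I_R]$. Blue $X_y$ are determined by $G_B$, which we have conditioned on. So only red $y\in I_R$ create dependence. Here I would expose the pairs of $I_R$ in a fixed order and use a martingale/ordered argument: each pair $xz$ not yet ``explained'' by previously revealed edges is an edge with probability $p$ independently. The number of explained pairs is at most the bound from \eqref{eq:closed}. With $N=\binom r2\ge (\frac{(1/2-\epsilon)^2}{2}-\epsilon)K^2$ pairs and at most $(\frac1{20}+5\epsilon)K^2$ of them excusable, at least $\bigl(\frac{(1/2-\epsilon)^2}{2}-\frac1{20}-6\epsilon\bigr)K^2$ pairs must be non-edges, each independently with probability $1-p\le e^{-p}$. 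This gives the stated bound.

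The main obstacle is making this ordered exposure rigorous for $y\in I_R$. Whether $xz$ is covered by $y$ depends on edges $yx,yz$ within $I_R$ that may be exposed later. I would fix the exposure order and declare $xz$ excused only if, at the moment $xz$ is queried, both $yx,yz$ have already been revealed as edges (or the cover comes from outside $I_R$ or from blue). Any pair covered at the end but not at its query time has one of $yx,yz$ queried later. I would charge such a pair to the later-queried triangle edge instead, arguing that the set of charged pairs still injects into $\sum_y\binom{|X_y\cap I|}2$ up to an $\epsilon K^2$ error. Failing that, I would run the union over which excuse set occurs with a counting factor absorbed by \eqref{eq:closed}.
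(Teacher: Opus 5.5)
There is a genuine gap, and it sits exactly at the step you flag as the main obstacle. Your sequential argument needs every pair $\{x,z\}\subseteq I_R$ whose red edge gets deleted to be excused by information revealed \emph{before} $xz$ is queried. Otherwise an unexcused pair can be an edge that is excused only later, and the factor $1-p$ for that pair is lost. Neither proposed repair closes this.

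\begin{itemize}
\item Charging a late-excused pair to the later-queried edge $yz$ only bounds how many such pairs there are. It does not restore the probability bound, because the event ``all pairs unexcused at query time are non-edges'' is strictly smaller than ``$I$ is independent''.
\item The union bound over excuse sets fails quantitatively. The excuse set can be any subset of size up to $(\frac1{20}+5\epsilon)K^2$ among the $\binom r2=\Theta(K^2)$ pairs, so there are $e^{\Theta(K^2)}$ choices. The target bound is only $e^{-\Theta(pK^2)}=e^{-\Theta(K\log n)}$. Note that \eqref{eq:closed} controls the \emph{size} of the excuse set, not the number of possibilities.
\end{itemize}

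The missing idea is to use the specific deletion rule in the construction. From a monochromatic red triangle $uvw$ with $\pi_R(u)<\pi_R(v)<\pi_R(w)$, only the edge $vw$ opposite the vertex of smallest label is removed. Hence if the red edge $v_xv_z$ is deleted in a red triangle, its cover $y=\pi_R(u)$ satisfies $y<x,z$.

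Now reveal $\pi_R,\pi_B$ and all of $G_B$, then expose the pairs of $G_R$ in lexicographic order. Every pair containing $y$ is revealed before $\{x,z\}$, so $X_y$ is fully determined when $xz$ is queried. Blue covers are already fixed by $G_B$. Consequently, suppose a pair is not covered at query time by any blue $X_y$ or any red $X_y$ with $y<x,z$. Then it must be a non-edge if $I$ is independent, and no later cover can rescue it.

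Whether a pair is such a forced non-edge is decided sequentially. On \eqref{eq:closed} there are at least $\binom r2-(\frac1{20}+5\epsilon)K^2$ of them, and the bound $(1-p)^{\binom r2-(1/20+5\epsilon)K^2}$ follows directly. Your case split into $y\notin I_R$ versus $y\in I_R$ becomes unnecessary.

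This ordering is precisely why the construction breaks ties by $\pi_R$. Under an unordered rule, triangles inside $G_R[I_R]$ could excuse one another. You would then need a separate argument, for example that edge sets lying entirely in triangles are rare in this sparse regime, and your proposal does not supply one.
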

\begin{proof}
Assume $r\ge(\frac12-\epsilon)K$, and completely reveal $\pi_R$ and $\pi_B$.  Then reveal all pairs of $G_B$, and finally expose the pairs in $G_R$ in lexicographic order. This order guarantees that $X_x$ is determined for all vertices $x\in V_B$ and for all vertices $x\in V_R$ with $x<\pi_R(v),\pi_R(w)$ before the Bernoulli variable determining the pair $\pi_R(v)\pi_R(w)$ is seen.

If $I$ is independent in $G$, $\{v,w\}\subset I$, and $\{v,w\}\not\subset X_x$ for all vertices $x\in V_B$ and for all vertices $x\in V_R$ with $x<\pi_R(v),\pi_R(w)$, we have $\pi_R(v)\pi_R(w)\notin E(G_R)$, for otherwise the red edge $vw\in E(G_1)$ survives both deletion rules.  
Lemma~\ref{lem:closed} and sequential conditioning gives
\begin{align*}
\Pp(I\text{ is independent in }G \mid \pi^I_{r,b})
 \le(1-p)^{\binom{r}{2}-(1/20+5\epsilon)K^2}\le \exp\left(-p\left(\tfrac{\left(\frac12-\epsilon\right)^2}{2}-\tfrac{1}{20}-6\epsilon\right)K^2\right).
\end{align*}
\end{proof}

The proof of Theorem~\ref{thm:alpha} and thus Theorem~\ref{thm:lower} follows easily.
The bad events in Lemmas~\ref{lem:closed} and \ref{lem:projection} have probability $o(1)$.  Otherwise, Lemma~\ref{lem:exposure} applies.  Therefore, by taking a union bound over all $I$
and using that $pK=10\log n$,
\begin{align*}
    \Pp(\alpha(G)\ge K)
    &\le o(1)+{n\choose K}\max_{\substack{r,b\\r\ge (\frac12-\epsilon)K}}\Pp(I\text{ is independent in }G \mid \pi^I_{r,b})\\
    &\le o(1)+{n\choose K}\exp\left(-p\left(\frac{\left(\frac12-\epsilon\right)^2}{2}-\frac{1}{20}-6\epsilon\right)K^2\right)\\
   & \le o(1)+\exp\left(\left[\frac12+o(1)-10\left(\frac{\left(\frac12-\epsilon\right)^2}{2}-\frac{1}{20}-6\epsilon\right)\right]K\log n\right)\\
   &\le o(1)+\exp\left(-\frac1{10}K\log n\right).
\end{align*}
\qed

\section{Shearer's upper bound}\label{upper}
We now include a proof of Shearer's bound following the presentation in~\cite{Shearer91}.  Define
\begin{align}\label{eq:shearer-recursion}
 f(0)=1,\qquad
 f(d)=\frac{1+(d^2-d)f(d-1)}{d^2+1}\quad(d\ge1).
\end{align}
A direct induction from the recursion shows that $f(d)$ and the difference
$f(d)-f(d+1)$ are both non-increasing. 

\begin{theorem}\cite{Shearer91}\label{thm:shearer}
If $G$ is a triangle-free graph on $n$ vertices with degree sequence $d_1,\ldots,d_n$, then
\[
 \alpha(G)\ge\sum_{i=1}^n f(d_i).
\]
\end{theorem}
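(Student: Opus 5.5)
Shearer's argument is an induction on $n$ driven by deleting a random closed neighborhood. The case $n=0$ is trivial, so assume $n\ge1$ and write $W=\sum_{i}f(d_i)$. For a vertex $v$, let $G_v=G-N[v]$. Any independent set of $G_v$ extends by $v$ to an independent set of $G$, so $\alpha(G)\ge 1+\alpha(G_v)$. By induction, $\alpha(G_v)\ge\sum_{u\in V(G_v)}f(d_{G_v}(u))$. Averaging over $v$ chosen with suitable weights, it will suffice to show that some $v$ satisfies
\[
 1+\sum_{u\in V(G_v)} f(d_{G_v}(u))\ \ge\ W .
\]

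First I track what changes in passing from $G$ to $G_v$.
\begin{itemize}
\item The vertices of $N[v]$ disappear, losing $f(d_v)+\sum_{w\sim v}f(d_w)$.
\item Because $G$ is triangle-free, $N(v)$ is independent. So each vertex $u$ at distance two from $v$ loses exactly $|N(u)\cap N(v)|$ from its degree.
\item Since $f$ is non-increasing, $f(d_u-j)\ge f(d_u)+j\,(f(d_u-1)-f(d_u))$ for each such $u$. Here I use that $f(d)-f(d+1)$ is non-increasing, i.e.\ convexity of $f$, so the total gain is at least a sum of single-step differences.
\item Each edge $wu$ with $w\in N(v)$ and $u\notin N[v]$ thus contributes at least $f(d_u-1)-f(d_u)$.
\end{itemize}
The net change is therefore at least
\[
 1-f(d_v)-\sum_{w\sim v}\Bigl[f(d_w)-\sum_{u\sim w,\,u\neq v}\bigl(f(d_u-1)-f(d_u)\bigr)\Bigr].
\]

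Next I average this over $v$ chosen uniformly at random, and want the average to be $\ge 0$. Double counting rewrites the sum $\sum_v\sum_{w\sim v}$ as $\sum_w d_w f(d_w)$. Each ordered path $v\!-\!w\!-\!u$ with $u\ne v$ is counted once from each endpoint, so the double sum over $v$ becomes $\sum_u d_u(d_u-1)\bigl(f(d_u-1)-f(d_u)\bigr)$. The averaged condition is thus
\[
 n\ \ge\ \sum_u\Bigl[(1+d_u)f(d_u)-(d_u^2-d_u)\bigl(f(d_u-1)-f(d_u)\bigr)\Bigr]
\]
(with the $d_u=0$ term read as $f(0)=1$). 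Rearranging the recursion gives $(d^2+1)f(d)=1+(d^2-d)f(d-1)$, i.e.\ $(1+d)f(d)-(d^2-d)(f(d-1)-f(d))=1$. So each bracket equals exactly $1$, and the average is exactly $0$. This explains why the recursion is defined as it is. Hence some $v$ has nonnegative net change, and induction closes.

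The main obstacle is the convexity step. I must verify the cited monotonicity of $f$ and of $f(d)-f(d+1)$ from the recursion, which is presumably a routine induction as the paper says. I must also make sure that an isolated $v$ (with $d_v=0$, giving a gain of $1-f(0)=0$) and vertices adjacent to several neighbors of $v$ are handled by the convexity inequality without loss.
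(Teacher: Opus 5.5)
Your strategy matches the paper's: delete $N[v]$, linearize the degree drop at distance two using convexity, sum over $v$, and let the recursion make each bracket vanish. However, the double-counting step is false. Write $g(d)=f(d-1)-f(d)$. The quantity you need is
\[
\sum_v\sum_{w\sim v}\sum_{u\sim w,\,u\neq v} g(d_u)=\sum_w (d_w-1)\sum_{u\sim w} g(d_u)=\sum_u g(d_u)\sum_{w\sim u}(d_w-1).
\]
The number of paths $v\!-\!w\!-\!u$ ending at a fixed $u$ is $\sum_{w\sim u}(d_w-1)$, not $d_u(d_u-1)$. The latter counts paths with $u$ as the \emph{middle} vertex. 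The two expressions agree only when every edge joins vertices of equal degree.

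A small example shows the failure. For the path $a\!-\!b\!-\!c$ the true value is $2g(1)=1$, while your formula gives $2g(2)=1/5$. Correspondingly, the sum over $v$ of the net change is $4/5$, not $0$. So your claim that the average is exactly $0$ is wrong for non-regular graphs.

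The missing idea is a second use of the monotonicity of $g$, this time across edges. Group the ordered sum by unordered edges $\{w,u\}$: each edge contributes $(d_w-1)g(d_u)+(d_u-1)g(d_w)$, whereas your target $\sum_u d_u(d_u-1)g(d_u)$ assigns it $(d_w-1)g(d_w)+(d_u-1)g(d_u)$. The difference is
\[
(d_w-d_u)\bigl(g(d_u)-g(d_w)\bigr)\ge 0,
\]
since $g$ is non-increasing. Summing over edges gives
\[
\sum_u g(d_u)\sum_{w\sim u}(d_w-1)\ \ge\ \sum_u d_u(d_u-1)\,g(d_u),
\]
which is exactly the direction you need. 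With it, the recursion shows the summed net change is $\ge 0$ rather than $=0$, some $v$ works, and the induction closes.

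This rearrangement step is the paper's ``edgewise comparison,'' and it is where the real content lies. The convexity and monotonicity facts you flagged as the main obstacle are routine inductions from the recursion.
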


\begin{proof}
We will prove the theorem by induction on $n$, and note that it holds for $n=0$. Let $S(G) =\sum_{i=1}^nf(d_i)$.  For a vertex $i$, let
$N_1^i=N(i)$, and let $N_2^{i}$ be the vertices at distance two from $i$.  For all $k \in N_1^i$, let $n_i(k) = |N_1^k \cap N_1^i|$.  Define $H_i := G-\big(\{i\} \cup N_1^i\big)$. 
Then triangle-freeness gives
\begin{align*}
 S(H_i)=S(G)-f(d_i)-\sum_{j\in N_1^i}f(d_j)
 +\sum_{k\in N_2^i}\bigl(f(d_k-n_i(k))-f(d_k)\bigr).
\end{align*}

If some $i$ satisfies
\begin{align}\label{eq:good-i}
 1-f(d_i)-\sum_{j\in N_1^i}f(d_j)
 +\sum_{k\in N_2^i}\bigl(f(d_k-n_i(k))-f(d_k)\bigr)\ge0,
\end{align}
then induction shows the result, as a maximum independent set of $H_i$, together with $i$, has size at least $S(G)$.

We show that \eqref{eq:good-i} holds on average.  Let $A$ be the sum of the left side of  \eqref{eq:good-i} over all $i$, and let $ B_i=\sum_{k\in N_2^i}[f(d_k-n_i(k))-f(d_k)]$.  Changing the order of summation gives
\[
 A=\sum_i\left(1-(d_i+1)f(d_i)+B_i\right).
\]
Since $f(d-1)-f(d)$ is a non-increasing function of $d$,
\[
 f(d_k-n_i(k))-f(d_k)
 \ge n_i(k)[f(d_k-1)-f(d_k)].
\]
Summing over $i$ and pairing the two orientations of each edge gives
\begin{align*}
 \sum_iB_i\ge
 \sum_i(d_i^2-d_i)[f(d_i-1)-f(d_i)].
\end{align*}
The edgewise comparison used here is:
\[
 (d_i-d_j)\bigl([f(d_j-1)-f(d_j)]-[f(d_i-1)-f(d_i)]\bigr)\ge0.
\]
Consequently,
\begin{align*}
 A&\ge\sum_i\bigl(1-(d_i+1)f(d_i)
 +(d_i^2-d_i)(f(d_i-1)-f(d_i))\bigr)\\
 &=\sum_i\bigl(1+(d_i^2-d_i)f(d_i-1)-(d_i^2+1)f(d_i)\bigr)=0,
\end{align*}
where the final equality is \eqref{eq:shearer-recursion}.  Hence some $i$ satisfies \eqref{eq:good-i}, completing the induction.
\end{proof}

\begin{corollary}\label{cor:upper}
\[
 R(3,k)\le\left(1+o(1)\right)\frac{k^2}{\log k}.
\]
\end{corollary}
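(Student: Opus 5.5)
We derive Corollary~\ref{cor:upper} from Theorem~\ref{thm:shearer} by way of the asymptotics of $f$. We show $f(d)\ge (1+o(1))\frac{\log d}{d}$ as $d\to\infty$, apply this to the average degree, and then split into cases according to the maximum degree.

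\emph{Step 1: asymptotics of $f$.} We compare $f$ with the continuous Shearer function $g(d)=\frac{d\log d-d+1}{(d-1)^2}$, which satisfies $g(d)=(1+o(1))\frac{\log d}{d}$. It is convex and decreasing, and it solves the differential analogue $(d+1)g(d)=1+(d-d^2)g'(d)$ of the recursion \eqref{eq:shearer-recursion}. Rewriting the recursion as
\[
1-(d+1)f(d)+(d^2-d)\bigl(f(d-1)-f(d)\bigr)=0,
\]
we check that $g$ satisfies the inequality ``$\le 0$'' in this relation. Convexity gives $g(d-1)-g(d)\ge -g'(d)$, and this is the direction that makes $g$ a subsolution. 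Comparing by induction on $d$ then gives $f(d)\ge g(d)$.

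If the exact inequality is delicate at small $d$, it suffices to show $f(d)\ge(1-\delta)g(d)$ for all $d\ge d_0(\delta)$. The sequence $h(d)=d f(d)$ satisfies an approximately linear recursion $h(d)\approx h(d-1)+\frac{1}{d}-\frac{h(d-1)}{d}+O(d^{-2})$. One can then show directly that $h(d)\ge (1-\delta)\log d$ eventually. Making this comparison clean is the step I expect to be the main obstacle.

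\emph{Step 2: convexity.} We need convexity of $f$ (or of $g$) to pass from $\sum_i f(d_i)$ to $n f(\bar d)$. The paper notes that $f(d)-f(d+1)$ is non-increasing, which is exactly discrete convexity. We extend $f$ piecewise linearly, and Jensen gives
\[
\alpha(G)\ge n f(\bar d)\ge (1+o(1))\,n\frac{\log \bar d}{\bar d},
\]
whenever $\bar d\to\infty$. Monotonicity of $f$ handles the case of bounded $\bar d$, where we get $\alpha(G)\ge c\,n$.

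\emph{Step 3: Ramsey bound.} Let $G$ be triangle-free on $N=(1+\delta)k^2/\log k$ vertices with $\alpha(G)<k$. Every neighborhood is independent, so every degree, and hence $\bar d$, is less than $k$. Since $x\mapsto \log x/x$ is decreasing for $x\ge e$, Step 2 gives
\[
\alpha(G)\ge(1+o(1))N\frac{\log k}{k}=(1+\delta+o(1))k\ge k
\]
for large $k$. This is a contradiction. In the case $\bar d\le e$ the bound $\alpha\ge cN\gg k$ is immediate. Letting $\delta\to0$ gives $R(3,k)\le(1+o(1))k^2/\log k$.
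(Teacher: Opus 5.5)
Your main argument is correct, but it follows a different route from the paper.

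\textbf{Comparison with the paper.} The paper uses neither convexity nor the average degree. It bounds $\sum_i f(d_i)\ge nf(\Delta)$ by monotonicity alone and pairs this with $\alpha(G)\ge\Delta$. It then gets $df(d)=\log d+O(1)$ by telescoping the recursion for $h(d)=df(d)$.

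You instead show that Shearer's continuous function $g$ is a subsolution of the recursion. This gives an explicit, non-asymptotic bound $f(d)\ge g(d)\ge(\log d-1)/d$. Combined with Jensen, it also gives the stronger average-degree form of Shearer's bound.

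For the corollary itself Jensen is not needed. Since $d_i<k$, monotonicity gives $\alpha(G)\ge\sum_i f(d_i)\ge Nf(k)\ge N(\log k-1)/k$. This also removes any worry in Step 3 about the uniformity of the $o(1)$ term.

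The paper's route is shorter and purely discrete. Yours trades the error-term bookkeeping for a convexity fact about $g$. You only assert that fact, but it is easy to supply: $g(d)=\int_0^1\frac{1-t}{1+t(d-1)}\,dt$, and each integrand is positive, decreasing and convex in $d\ge 0$.

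\textbf{Two slips to fix.} First, the sign in Step 1 is reversed. By the ODE, the left side of your rewritten recursion evaluated at $g$ equals $(d^2-d)\bigl[g'(d)+g(d-1)-g(d)\bigr]$. Convexity makes this $\ge 0$, not $\le 0$. That inequality is $(d^2+1)g(d)\le 1+(d^2-d)g(d-1)$. Since $x\mapsto\frac{1+(d^2-d)x}{d^2+1}$ is non-decreasing, induction from $f(0)=g(0)=1$ then gives $f\ge g$, as you intend.

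Second, your fallback recursion is wrong. The exact relation is $h(d)-h(d-1)=\frac{d-h(d-1)}{d^2+1}$, so the damping term is $h(d-1)/(d^2+1)$, not $h(d-1)/d$. Your version would drive $h(d)\to 1$ instead of letting it grow like $\log d$. With the correct form, $h(d)-h(d-1)-\frac1d=-\frac{d\,h(d-1)+1}{d(d^2+1)}=O(\log d/d^2)$, which is summable. That is exactly the paper's telescoping argument. The paper states this per-step error as less than $1/d^2$, which is false once $h(d-1)\ge 1$, but only summability is actually used.
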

\begin{proof}
    Again, it suffices to show a bound on the independence number of triangle-free graphs on $n$ vertices.
    The neighborhood of any vertex in a triangle-free graph is independent. Denoting by $\Delta$ the maximum degree of $G$, we have from Theorem~\ref{thm:shearer} (noting again that $f$ is non-increasing) that
    $\alpha(G)\ge\max\{\Delta, nf(\Delta)\}$.
    
    For fixed $n$, this bound is minimized for $\Delta=nf(\Delta)$, so we may assume that this equality holds. The recursion~\eqref{eq:shearer-recursion} implies that $\left|df(d)-(d-1)f(d-1)- \frac{1}{d}\right|<\frac{1}{d^{2}}$, and therefore $df(d) = \log{d} + O(1)$. Thus, 
    \[
    n=\frac{\Delta}{f({\Delta})}=\frac{\Delta^2}{\log\Delta+O(1)},
    \]
    implying the desired bound
    \[
     \alpha(G)\ge \Delta=(1+o(1))\sqrt{\tfrac12 n\log n}.
    \]
\end{proof}

\subsection*{AI usage}
The authors derived the mathematics and wrote a complete draft of the paper, including all major simplification ideas. ChatGPT 5.6 plus was
then used to provide editorial suggestions and feedback on the presentation.

\bibliographystyle{abbrvurl}
\bibliography{refs.bib}

\end{document}